\renewcommand{\Re}{{\operatorname{Re}\,}}
\renewcommand{\Im}{{\operatorname{Im}\,}}
\newcommand{\bbb}{|\!|\!|}
\renewcommand{\epsilon}{\varepsilon}
\newcommand{\dist}{{\operatorname{dist}}}
\newcommand{\sm}{\smallsetminus}
\newcommand{\szego}{Szeg\H{o} }
\newcommand{\inv}{^{-1}}
\newcommand{\kahler}{K\"ahler }
\newcommand{\sqrtn}{\sqrt{N}}
\newcommand{\wh}{\widehat}
\newcommand{\PP}{{\mathbb P}}
\newcommand{\N}{{\mathbb N}}
\newcommand{\R}{{\mathbb R}}
\newcommand{\C}{{\mathbb C}}
\newcommand{\Z}{{\mathbb Z}}
\newcommand{\CP}{\C\PP}
\renewcommand{\d}{\partial}
\newcommand{\dbar}{\bar\partial}
\newcommand{\ddbar}{\partial\dbar}
\newcommand{\E}{{\mathbf E}}
\newcommand{\half}{{\textstyle \frac 12}}
\newcommand{\vol}{{\operatorname{Vol}}}
\newcommand{\SU}{{\operatorname{SU}}}
\newcommand{\FS}{{{\operatorname{FS}}}}
\renewcommand{\phi}{\varphi}
\newcommand{\ccal}{\mathcal{C}}
\newcommand{\dcal}{\mathcal{D}}
\newcommand{\hcal}{\mathcal{H}}
\newcommand{\lcal}{\mathcal{L}}
\newcommand{\mcal}{\mathcal{M}}
\newcommand{\ncal}{\mathcal{N}}
\newcommand{\ocal}{\mathcal{O}}
\newcommand{\al}{\alpha}
\newcommand{\ga}{\gamma}
\newcommand{\Ga}{\Gamma}
\newcommand{\la}{\lambda}
\newcommand{\ep}{\varepsilon}
\newcommand{\de}{\delta}
\newcommand{\De}{\Delta}
\newcommand{\om}{\omega}
\newtheorem{theo}{{\sc Theorem}}[section]
\newtheorem{cor}[theo]{{\sc Corollary}}
\newtheorem{lem}[theo]{{\sc Lemma}}
\newtheorem{prop}[theo]{{\sc Proposition}}
\title[Overcrowding and hole probabilities for random zeros]
{Overcrowding and hole probabilities for random zeros on complex manifolds}
\author{Bernard Shiffman}
\address{Department of Mathematics, Johns Hopkins University, Baltimore,
MD 21218, USA} \email{shiffman@math.jhu.edu}
\author{Steve Zelditch}
\address{Department of Mathematics, Johns Hopkins University, Baltimore,
MD 21218, USA} \email{zelditch@math.jhu.edu}
\author{Scott Zrebiec}
\address{Department of Mathematics, Texas A \& M  University, College Station,
TX 77843, USA} \email{zrebiec@math.tamu.edu}
\thanks{Research of the first author partially supported by NSF grant 
DMS-0600982;
research of the second author partially supported by NSF grant DMS-0603850.}
\date{June 4, 2008}
\begin{document}

\begin{abstract} We give asymptotic large deviations estimates for the volume
inside a domain $U$ of  the zero set of a random polynomial of degree $N$, or 
more generally, of a random holomorphic section of the
$N$-th power of a positive line bundle on a compact K\"ahler manifold. In
particular, we show that for all  $\delta>0$, the probability that this volume
differs by more than $\delta N$  from its average value is less than
$\exp(-C_{\delta,U}N^{m+1})$, for some constant
$C_{\delta,U}>0$.   As a consequence, the ``hole probability" that a random
section does not vanish in $U$ has an upper bound of the form
$\exp(-C_{U}N^{m+1})$.
\end{abstract}

\maketitle

 \section{Introduction}

The purpose of this article is to prove large deviations estimates for 
probabilities of
overcrowding and undercrowding of zeros of random holomorphic sections $s_N \in 
H^0(M, L^N)$
of high powers of a positive Hermitian  line bundle $L^N \to M$  over a compact 
\kahler manifold. A special case
is that of $\SU(m + 1)$ polynomials of degree $N$.
Our main results give  rapid exponential decay rates as the degree $N \to \infty
$   for the probability that
the zero set of a
random holomorphic section of $L^N$ is too large or too small in an arbitrary 
fixed domain,  and in
particular for the ``hole probability" that it misses the domain entirely.

To state our results we need some notation; we follow \cite{SZvar} and  review 
the
relevant notation and background in \S \ref{background}.
Let $(L, h)$ be a Hermitian holomorphic line bundle  with
positive curvature
$\Theta_h$ over an $m$-dimensional compact complex manifold $M$. Then
$\om_h:=\frac  i2\Theta_h$ is a \kahler form, which induces inner products (see
(\ref{induced}))
 and associated Gaussian probability measures  $\ga_N$ (see (\ref{gaussian}))   
on
the spaces $H^0(M, L^N)$ of holomorphic
sections of powers of $L$. 

We denote the zero set of a section $s_N \in H^0(M, L^N)$ by $Z_
{s_N} = \{z: s_N(z) = 0\}$. It is a complex $(m-1)$-dimensional
hypersurface whose $(2m-2)$-dimensional volume
 in an open set $U$ is given by
\begin{equation}  \vol_{2m-2}(Z_{s_N}\cap U) = \int_{Z_{s_N}\cap U}
\frac{\omega_h^{m-1}}{(m-1)!}\,.
\end{equation}
\begin{theo}\label{volumes}  Let $(L,h)\to M$ be a positive Hermitian line 
bundle
over a compact \kahler manifold $M$ of dimension $m$,  and give $M$ the metric
with \kahler form
$\om_h=\frac i2 \Theta_h$. Let  $U$ be an open subset of $M$ such that $\d U$ 
has
zero measure in $M$.
Then for all
$\de>0$, there is a constant $C_{\de,U}>0$  such
that
$$\ga_N
\left\{ s_N:\, \left|\frac 1N\, \vol_{2m-2}(Z_{s_N}\cap U) - \frac m\pi
\,{\vol_{2m}(U)}\right| >
\delta \right\}\leq e^{- C_{\delta,U} N^{m+1}}\quad \forall\ N\gg 0\,.$$
\end{theo}

Here, $N\gg 0$ means that $N\ge N_0$ for some $N_0=N_0(\de)\in \Z^+$.
In particular, for the case where $\dim M=1$, the volume $\vol_{2m-2}(Z_{s_N}
\cap
U)$ becomes the
number $\ncal^U_N(s_N)$ of zeros of $s_N$ in $U$, and we have

\begin{cor}\label{dim1}

 Let $(L,h)\to M$ be a positive Hermitian line bundle over a compact Riemann
surface $M$, and give $M$ the metric with \kahler form
$\om_h=\frac i2 \Theta_h$.  Let
$U\subset M$ be an open set in $M$ such that $\d U$ has zero measure in
$M$.  Then for all $\de>0$, there is a constant $C_{\de,U}>0$ such that
$$ \ga_N \left\{ s_N:\;\left|\frac 1N \, \ncal^U_N(s_N) - \frac 
{\operatorname{Area}(U)}\pi \right|>
\delta\right\} \leq e^{- C_{ \de,U} N^2}\quad \forall\ N\gg 0\,.$$
\end{cor}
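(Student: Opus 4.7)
The plan is to deduce Corollary \ref{dim1} directly from Theorem \ref{volumes} by specializing to $m=1$. There are no real new ideas required; the task reduces to correctly identifying what each term of the theorem becomes when the base manifold is a Riemann surface, and verifying that the resulting statement is exactly what Corollary \ref{dim1} asserts.

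First I would substitute $m=1$ into every quantity appearing in Theorem \ref{volumes}. The exponent $m+1$ becomes $2$, matching the $N^2$ on the right-hand side of the corollary, and the coefficient $\frac{m}{\pi}$ becomes $\frac{1}{\pi}$. The ambient volume $\vol_{2m}(U)$ becomes $\vol_2(U)=\operatorname{Area}(U)$, so the mean term $\frac{m}{\pi}\vol_{2m}(U)$ becomes $\frac{\operatorname{Area}(U)}{\pi}$, matching the corollary.

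The only step requiring a brief justification is the identification $\vol_{2m-2}(Z_{s_N}\cap U)=\ncal^U_N(s_N)$ when $m=1$. With $m=1$ the zero set $Z_{s_N}$ is a discrete subset of $M$, and the defining integral reduces to
$$\vol_0(Z_{s_N}\cap U)=\int_{Z_{s_N}\cap U}\frac{\om_h^0}{0!}=\int_{Z_{s_N}\cap U}1,$$
where the integral is taken with respect to the current of integration $[Z_{s_N}]$. By the Poincar\'e--Lelong formula (or directly, since a local defining function $s_N=uz^k$ with $u$ nonvanishing produces the current $k\,\de_0$), each zero of $s_N$ of order $k$ contributes mass $k$, so this integral equals the number of zeros counted with multiplicity, i.e.\ $\ncal^U_N(s_N)$.

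With these identifications in hand, the event appearing in Theorem \ref{volumes} becomes exactly the event appearing in Corollary \ref{dim1}, and the bound $e^{-C_{\de,U}N^{m+1}}$ becomes $e^{-C_{\de,U}N^2}$; the constant $C_{\de,U}$ and the threshold $N_0(\de)$ can be taken unchanged. The only potential pitfall, which is essentially cosmetic, is the convention for counting multiplicities on either side, and this is settled by the Poincar\'e--Lelong computation above; no further analytic work is needed.
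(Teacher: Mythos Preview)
Your proposal is correct and follows exactly the paper's approach: the corollary is stated immediately after Theorem~\ref{volumes} as the specialization to $m=1$, with the single remark that $\vol_{2m-2}(Z_{s_N}\cap U)$ becomes the number $\ncal^U_N(s_N)$ of zeros in $U$. Your brief Poincar\'e--Lelong justification for the multiplicity convention is slightly more explicit than what the paper writes, but the argument is identical.
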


In the case where $M =\CP^m, L =\ocal(1)$ and $h$ is the Fubini-Study metric, the 
Gaussian ensembles $(H^0(M,L^N),\ga_N)$
coincide with the well-known $\SU(m +1)$ 
ensembles of degree $N$  polynomials,  
$$f_N=\sum_{|J|\le N} c_J 
{N\choose J}^{1/2} z_1^{j_1}\cdots z_m^{j_m}\;,$$
where
$J=(j_1,\dots,j_m)\in \N^m$, $z^J= z_1^{j_1}\cdots z_m^{j_m}$, ${N\choose J} = 
\frac{N!}{(N-|J|)!j_1!\cdots j_m!}$ and the $c_J$ are independent identically 
distributed complex Gaussian random
variables (see \cite{BSZ, SZ}).  Applying Theorem \ref{volumes} to this case, we 
obtain the same estimate for large deviations of the Fubini-Study volume of 
$Z_{f_N}\cap U$.  We also have a similar estimate for the Euclidean 
volume:

\begin{cor}\label{poly} Let $f_N$
be a degree $N$ Gaussian random $\SU(m + 1)$ polynomial, and let  
$U$ be a bounded domain in $\C^m$ such that $\d U$ 
has Lebesgue measure zero.
 Then for all
$\de>0$, there is a constant $C_{\de,U}>0$  such
that for $N$ sufficiently large, we have
\begin{equation}\label{poly1}\operatorname{Prob}
\left\{ \left|\frac 1N \vol_{2m-2}^E(Z_{f_N}\cap U) -V_U\right| >
\delta \right\}\leq e^{- C_{\delta,U} N^{m+1}}\,,\end{equation}
where $\vol^{E}$ denotes Euclidean volume in $\C^m$ and
$$V_U = \frac 1{(m-1)!}\int_U \frac i{2\pi}\ddbar\log (1+\|z\|^2)\wedge (\frac i2 
\ddbar \|z\|^2)^{m-1}\,.$$
In particular, if $U$ is the ball $B(r)$ of radius $r$ in $\C^m$, then 
\eqref{poly1} becomes
\begin{equation}\label{poly2}\operatorname{Prob}
\left\{ \left|\frac 1N \,n_{f_N}(r,0) -\frac{r^2}{1+r^2}\right|>
\delta \right\}\leq e^{- C_{\delta,B(r)} N^{m+1}}\,,\end{equation}
where $$n_f(r,0)= \frac{(m-1)!}{\pi^{m-1}r^{2m-2}}\vol^E_{2m-2}(Z_f\cap B(r))
$$ is the unintegrated Nevanlinna counting function.
\end{cor}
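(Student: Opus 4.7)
The plan is to apply Theorem \ref{volumes} with $M=\CP^m$, $L=\ocal(1)$, and $h=h_\FS$ the Fubini-Study metric. Under these choices the Gaussian ensemble $(H^0(\CP^m,\ocal(N)),\ga_N)$ coincides with the $\SU(m+1)$ ensemble on polynomials of degree $\le N$, and Theorem \ref{volumes} directly yields the analogue of \eqref{poly1} with the Euclidean volume $\vol_{2m-2}^E$ replaced by the Fubini-Study volume $\vol_{2m-2}^\FS$ (and $V_U$ replaced by $\frac m\pi \vol_{2m}^{\FS}(U)$). The remaining task is to upgrade the test form from $\om_{\FS}^{m-1}/(m-1)!$ to $\om_E^{m-1}/(m-1)!$ inside
\[
\vol^E_{2m-2}(Z_{f_N}\cap U)=\int_U \frac{\om_E^{m-1}}{(m-1)!}\wedge [Z_{f_N}],\qquad \om_E=\tfrac i2\ddbar\|z\|^2.
\]

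By the Poincar\'e-Lelong formula $[Z_{f_N}]=N\om_1+\frac i\pi\ddbar\log\|f_N\|_{h_N}$, where $\om_1=\frac i{2\pi}\ddbar\log(1+\|z\|^2)$ is the Chern form of $(\ocal(1),h_\FS)$, we obtain
\[
\vol^E_{2m-2}(Z_{f_N}\cap U)-NV_U = \int_U \frac{\om_E^{m-1}}{(m-1)!}\wedge \tfrac i\pi\ddbar\log\|f_N\|_{h_N},
\]
so it suffices to bound this remainder by $\delta N$ off an event of probability at most $e^{-C_{\delta,U}N^{m+1}}$. Choose a smooth cutoff $\chi_\ep\in C^\infty_c(\C^m)$ with $\chi_\ep\equiv 1$ on an $\ep$-shrinking $U_{-\ep}$ of $U$ and $\supp\chi_\ep\subset U_\ep$ an $\ep$-thickening. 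The error from replacing the indicator of $U$ by $\chi_\ep$ in the remainder is bounded by $\vol^E_{2m-2}(Z_{f_N}\cap(U_\ep\sm U_{-\ep}))$, which is pointwise comparable to the Fubini-Study volume on this compact annular region and hence is controlled by Theorem \ref{volumes} applied to $U_\ep\sm U_{-\ep}$ once $\ep$ is chosen so that $\vol_{2m}^\FS(U_\ep\sm U_{-\ep})<\delta$ (possible since $\d U$ has measure zero). For the smoothed integral, $\om_E^{m-1}$ is $\ddbar$-closed, so integration by parts gives
\[
\int_{\CP^m}\chi_\ep\,\frac{\om_E^{m-1}}{(m-1)!}\wedge\ddbar\log\|f_N\|_{h_N}=\int_{\CP^m}\log\|f_N\|_{h_N}\cdot\ddbar\chi_\ep\wedge\frac{\om_E^{m-1}}{(m-1)!},
\]
which is controlled by the $L^1$-norm of $\log\|f_N\|_{h_N}$ on the compact support of $\ddbar\chi_\ep$.

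The main obstacle is a doubly-exponential concentration estimate for the normalized plurisubharmonic potential: for any fixed compact $K\subset\CP^m$ and $\delta>0$,
\[
\ga_N\Big\{\left\|\tfrac1N\log\|f_N\|_{h_N}\right\|_{L^1(K)}>\delta\Big\}\le e^{-C_{\delta,K}N^{m+1}}.
\]
This is of the same character as the ingredients driving Theorem \ref{volumes}: Bergman kernel asymptotics give $\E\|f_N(z)\|_{h_N}^2\asymp N^m$, and Gaussian tail bounds on the sublevel sets of $\|f_N\|_{h_N}$, combined with a covering of $K$ at scale $1/\sqrtn$, should produce the required exponent $N^{m+1}$. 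Granted this estimate, the remainder bound and thus \eqref{poly1} follow. Finally, \eqref{poly2} is obtained from \eqref{poly1} at $U=B(r)$ via the $\U(m)$-invariant radial computation
\[
V_{B(r)}=\frac1{(m-1)!}\int_{B(r)}\om_1\wedge\om_E^{m-1}=\frac{\pi^{m-1}r^{2m-2}}{(m-1)!}\cdot\frac{r^2}{1+r^2},
\]
substituted into the definition of $n_{f_N}(r,0)$.
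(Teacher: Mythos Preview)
Your approach is correct and ultimately rests on the same ingredient as the paper's, but the paper takes a shorter route. Rather than starting from Theorem~\ref{volumes} (which is tied to the Fubini--Study form $\om_\FS^{m-1}$) and then correcting to the Euclidean form, the paper applies Theorem~\ref{main} directly with the test forms $\phi_j=\frac{\psi_j}{(m-1)!}\big(\frac i2\ddbar\|z\|^2\big)^{m-1}$, where $0\le\psi_1\le\chi_U\le\psi_2\le 1$ are smooth with compact support in $\C^m$ and $\int\psi_j\,\alpha$ approximates $V_U$. Since these $\phi_j$ are smooth compactly supported $(m-1,m-1)$-forms on $\CP^m$, Theorem~\ref{main} applies verbatim and the two-sided sandwich gives \eqref{poly1} at once. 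Your Poincar\'e--Lelong reduction followed by integration by parts simply re-derives the proof of Theorem~\ref{main} for this particular $\phi$, and the $L^1$ concentration estimate you flag as ``the main obstacle'' is exactly Lemma~\ref{both}, already available. (Incidentally, the mechanism you sketch for it---Gaussian tails plus a $1/\sqrt N$ covering---is how the paper obtains the \emph{pointwise} lower bound of Theorem~\ref{max}; converting that into an $L^1$ bound on $\log^-|s_N|$ requires the additional subharmonic/Poisson-kernel argument of Lemma~\ref{log-}, so your sketch understates the work involved.) Your treatment of the boundary error via Theorem~\ref{volumes} on an annular region and comparability of the two volume forms on compacta is valid but is likewise absorbed into the $\psi_1,\psi_2$ sandwich in the paper's version.
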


Corollary \ref{poly} follows by modifying the last step of the proof of 
Theorem \ref{volumes}; see \S \ref{s-poly}.

Letting $\de=\frac m\pi
\vol_{2m}(U)$ in Theorem \ref{volumes}, we obtain our estimate for the ``hole
probability":

\begin{theo} \label{hole} With the same hypotheses as Theorem \ref{volumes}, for
any
non-empty open set $U\in M$, there is a  constant $C_U>0$ such that
$$\ga_N
\{s_N:  Z_{s_N}\cap U=\emptyset\}
\leq e^{- C_{U} N^{m+1} }\quad \forall\ N\gg 0\,. $$  Furthermore, if there
exists a section in $H^0(M,L)$ that does not vanish on $\overline U$, then there
is a  constant $C'_U>0$ such that
$$\ga_N
\{s_N:  Z_{s_N}\cap U=\emptyset\}
\geq e^{- C'_{U} N^{m+1} }\quad \forall\ N\in \Z^+\,. $$ \end{theo}
The lower bound in Theorem \ref{hole} is elementary; see \S \ref{s-lower}.

Before sketching the novel features  of the proof, let us compare these results 
to prior results on
numbers (or volumes) of zeros of random analytic functions of various kinds. 
Among the  earliest results were those
of Offord \cite{O}  on excesses or deficiencies of zeros
of random entire analytic functions   \begin{equation} \label{f} f(z) = \sum_n 
a_n z^n \end{equation} in disks $D_r = \{z: |z| < r\}$ of $\C$.
The Taylor coefficients $a_n$ are assumed to be  independent random variables of 
several kinds. In \cite{So}, M. Sodin used Offord's method to
prove that the hole  probability that $\ncal^{D_r} = 0$, i.e. the probability 
that a random analytic function has no zeros in  $D_r$,  decays at least at the 
rate $O(e^{- C r^2}).$
    Peres-Virag \cite{PV}  gave
 an exact formula for the probability that $\ncal^{D_r} = k$ for a certain 
special determinantal ensemble of random analytic
 functions in the unit disk, which suggested that the hole probability should decay 
faster than $e^{- C r^2}$.
  Sodin and Tsirelson \cite{ST2} then proved that  $e^{- C' r^4} \leq \mbox
{Prob}\,\{\ncal^{D_r} = 0\} \leq e^{- C r^4}$ for certain $C, C' > 0$.
Further results on undercrowding and overcrowding were then proved by Krishnapur
\cite{Kr} for entire holomorphic functions of type (\ref{f})  on $\C$, and in 
\cite{Zr} for entire
holomorphic functions on $\C^m$. These articles are based on the properties of 
the monomials $z^n$ and the power series (\ref{f}).

Our results are concerned with analogous over- (and under-) crowding and hole 
probabilities, but in the different situation
where the domain $U$ is a fixed domain in a general \kahler manifold,  and where 
it is  the family  of analytic functions $s_N \in H^0(M, L^N)$ which changes 
with $N$. The change  is controlled
by the complex Hermitian differential geometry underlying the inner products 
(\ref{induced}) and the
associated \szego (or Bergman) kernels. In this general setting, there does not 
exist a useful power series type
representation (\ref{f}) for the analytic functions.  The representation
$s_N = \sum_j c_j S_j^N$ in terms of an orthonormal basis $\{S_j^N\}$ is almost 
useless for our large deviations estimates, in contrast to the power series 
representation of entire functions (\ref{f}) on $\C$, since  we know almost 
nothing about the basis elements $S_j^N$ on a general \kahler manifold. Thus, we 
must find an alternative to the  power series methods of the prior articles 
\cite{So,ST2,PV,Zr}.   We do this in  \S \ref{s-max}, where we replace the 
orthonormal basis $\{S^N_j\}$ by the asymptotically orthonormal
coherent states $\Phi^{z_{\nu}^N}_N$ centered on the points of  a ``lattice" 
$\{z_{\nu}^N\}$  of mesh $\frac{1}{\sqrt{N}}$ (see  (\ref{lattice})).  We then 
rely on our knowledge of the Bergman (or Szeg\H o) kernel for the inner product 
(\ref{induced}), in particular its off-diagonal asymptotics from \cite
{BSZ,SZsym},  to prove that inner products with these coherent states define  
asymptotically almost independent random variables, or equivalently,  the
values of $s_N$  at the  points $z_{\nu}^N$ are almost independent (see \S \ref
{LBEST}). We use this coherent state  analysis to prove a large deviations 
result for the maximum modulus of $s_N$ (Theorem \ref{max}); we expect it will
have other applications in complex geometry.

In an  earlier posting \cite{Zr2} (which this article supercedes), one of the 
authors  studied the same problems
for $\SU(m + 1)$ polynomials and obtained \eqref{poly2}.  In the case of 
$\SU(m+1)$ polynomials, the monomials ${N\choose J}^{1/2}z^J$ 
form an orthonormal basis and one can use
power series methods.   But  there is nothing
special about $\SU(m + 1)$
ensembles in terms of hole probabilities,  and the coherent state (i.e.\ \szego 
kernel) analysis in the present article allows
for the generalization from polynomials to sections of all positive 
holomorphic line  bundles over
\kahler manifolds.

 As in the model case of $\SU(m+1)$  polynomials of degree $N$
on $\C^{m + 1}$, the degree $N$ measures the complexity of the analytic 
functions
(or sections) $s_N$. As $N \to \infty$, the
zero set $Z_{s_N}$ of a random $s_N$ becomes denser and denser, and the 
probability
that it omits an open set $U$ becomes a very
rare event.
To be more precise, the random $Z_{s_N}$ not only becomes denser,
 but in fact  the mean random zero set $Z_{s_N}$ tends in the sense of
currents to the  curvature
$(1,1)$-form of the line bundle; i.e.,
\begin{equation}\label{mean}\frac 1 {N}\,\E_N ( [Z_{s_N}], \phi ) =\frac 1\pi
\int_M  \omega_h \wedge\phi\ +\ O\left(\frac 1{N^2}\right)\,,\qquad
\phi\in\dcal^{m-1,m-1}(M)\;,\end{equation} 
(see \cite{SZ}),
where $\E_N$ denotes the expectation of a
random variable in the ensemble\break
$(H^0(M, L^N), \ga_N)$, and $[Z]$ denotes the current of
integration over a hypersurface $Z$. 

We also have a
large deviations estimate for the ``linear statistics"
$( [Z_{s_N}], \phi )=
\int_{Z_{s_N}}\phi$ of equation \eqref{mean}: 
 \begin{theo} \label{main} Let $(L,h)\to (M,\om_h)$ be as in Theorem \ref
{volumes}, and give $M$ the metric with
\kahler form
$\om_h=\frac i2 \Theta_h$. Let
$\phi\in\dcal^{m-1,m-1}(M)$ be a smooth test form.  Then for all $\delta > 0$,
there exists $C_{\delta,\phi} > 0$ such that
$$\ga_N \left\{ s_N\in H^0(M,L^N):\; \left| \frac 1N\int_{Z_{s_N}}\phi - \frac 1\pi
\int_M \om_h\wedge
\phi
\right| >
\delta \right\}\leq e^{- C_{\delta,\phi} N^{m+1}}\qquad \forall\ N\gg 0\,.$$
\end{theo}

Theorem \ref{volumes} follows from  Theorem \ref{main}; see \S 
\ref{PROOFMAIN}. To prove Theorem \ref
{main}, we use Theorem \ref{max} on the large deviations of the maximum 
modulus,  together with an adaptation of the methods of \cite{ST2, Zr},
to obtain a large deviations estimate for the $L^1$-norm of $\log|s_N|$:
\begin{lem} \label{both} For all $\de>0$, there is a positive constant
$C_\de$ such that
\begin{equation*} \ga_N\left\{\int_M
\big|\log|s_N|_{h^N}\big|\,\ge\de N
\right\}\leq e^{-C_\de  N^{m+1}}\qquad \forall\ N\gg 0\,.\end{equation*}
 Here the integral is with respect to volume measure on $M$.
\end{lem}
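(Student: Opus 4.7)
The plan is to split $\int_M|\log|s_N|_{h^N}|\,d\vol=\int_M\log^+|s_N|_{h^N}\,d\vol+\int_M\log^-|s_N|_{h^N}\,d\vol$ and bound each piece separately, so that off an event of probability $\le\half e^{-C_\delta N^{m+1}}$ each piece is at most $\delta N/2$. The positive part is immediate from Theorem \ref{max} applied with parameter $\epsilon=\delta/(2\vol(M))$: off an event of $\ga_N$-probability $\le e^{-C'_\delta N^{m+1}}$ one has $\sup_M\log|s_N|_{h^N}\le\epsilon N$, hence $\int_M\log^+|s_N|_{h^N}\,d\vol\le\delta N/2$.

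For the negative part, the first step is to reduce the $L^1$ estimate to a pointwise one via a sub-mean-value inequality. Cover $M$ by finitely many trivializing charts; in each chart, $\log|s_N|_{h^N}=\log|f_N|-N\phi/2$ with $f_N$ holomorphic and $\phi$ smooth. Take an essentially disjoint cover of $M$ by balls $B_\nu^N$ of radius $R/\sqrtn$ centered on the lattice $\{z_\nu^N\}$ of \S\ref{s-max}, where $R$ is a large constant to be chosen. Since $\log|f_N|$ is plurisubharmonic, the sub-mean-value inequality together with the smoothness of $\phi$ (which keeps the weight correction $\tfrac{N}{2}(\mathrm{avg}_{B_\nu^N}\phi-\phi(z_\nu^N))$ of size $O(R^2)$ uniformly in $N$) yields
\[
\frac{1}{|B_\nu^N|}\int_{B_\nu^N}\log|s_N|_{h^N}\,d\vol\ \ge\ \log|s_N(z_\nu^N)|_{h^N}-C_R.
\]
Summing over $\nu$ and combining with the upper bound on $\log^+|s_N|_{h^N}$ from the previous step, the event $\{\int_M|\log|s_N|_{h^N}|\ge\delta N\}$ is contained, off a set of probability $\le e^{-C'_\delta N^{m+1}}$, in the event $\{\sum_\nu\log^-|s_N(z_\nu^N)|_{h^N}\ge c_\delta N^{m+1}\}$.

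The last step is an exponential Markov bound on this event. Each $s_N(z_\nu^N)$ is a centered complex Gaussian of variance $\Pi_N(z_\nu^N,z_\nu^N)\asymp N^m$, so a direct computation gives $\E[|s_N(z_\nu^N)|_{h^N}^{-\lambda}]=O_\lambda(N^{-m\lambda/2})$ for every $\lambda\in(0,2)$ and in particular $\E[e^{\lambda\log^-|s_N(z_\nu^N)|_{h^N}}]\le 2$ for $N$ large. Were the values $s_N(z_\nu^N)$ jointly independent, Markov would immediately yield $\ga_N\{\sum_\nu\log^-|s_N(z_\nu^N)|_{h^N}\ge c_\delta N^{m+1}\}\le e^{-\lambda c_\delta N^{m+1}+N^m\log 2}\le e^{-C_\delta N^{m+1}}$ and the proof would be complete. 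They are not independent, and converting this approximate independence into a rigorous factorization of the joint MGF is the main obstacle. The off-diagonal \szego asymptotics of \cite{BSZ,SZsym} (which also underlie the coherent state analysis of \S\ref{s-max}) show that on a lattice of spacing $R/\sqrtn$ the normalized pairwise covariance $\Pi_N(z_\nu^N,z_\mu^N)/\sqrt{\Pi_N(z_\nu^N,z_\nu^N)\Pi_N(z_\mu^N,z_\mu^N)}$ decays like $e^{-R^2|\nu-\mu|^2/2}$, so by taking $R$ large the joint Gaussian vector $(s_N(z_\nu^N))_\nu$ is a small perturbation of one with diagonal covariance. I would exploit this via an iterative conditioning argument along the lattice, in the spirit of \cite{ST2,Zr}: order the lattice points and estimate, step by step, the conditional MGF of $\log^-|s_N(z_\nu^N)|_{h^N}$ given the previously revealed coordinates, using that the conditional variance of $s_N(z_\nu^N)$ differs from $\Pi_N(z_\nu^N,z_\nu^N)$ only by the exponentially small off-diagonal \szego contributions, so the single-point moment estimate above survives intact and Chernoff still goes through.
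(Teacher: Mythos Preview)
Your outline is essentially correct, but the route for the $\log^-$ half is genuinely different from the paper's.  The paper does not discretize $M$ at scale $N^{-1/2}$ and does not use a Chernoff/MGF argument on lattice values.  Instead it proves an intermediate estimate on spherical averages (Lemma~\ref{log-}): in each coordinate chart it decomposes spheres $\{\|z\|=r\}$ into a \emph{fixed} (in $N$) number of small caps, applies the lower-bound half of Theorem~\ref{max} to each cap to produce points $\zeta_{kj}$ with $\log|s_N(\zeta_{kj})|_{h^N}>-\de N$, and then uses a Poisson-kernel inequality for the subharmonic function $\log|f_N|$ to turn these pointwise lower bounds into a lower bound for the spherical average; integrating in $r$ over coordinate annuli gives \eqref{lower}.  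Thus the paper's only probabilistic input is Theorem~\ref{max} itself, with the rest being deterministic potential theory in the style of \cite{ST2,Zr} (so your citation of \cite{ST2,Zr} for the iterative-conditioning idea is misplaced---those papers use the Poisson-kernel method, which is exactly what the present paper adapts).  Your approach instead bypasses both Lemma~\ref{log-} and the lower-bound half of Theorem~\ref{max}, going straight from the off-diagonal \szego decay to an almost-independence Chernoff bound on $\sum_\nu\log^-|s_N(z_\nu^N)|_{h^N}$.  This is a legitimate and arguably more direct route; the one step you should make precise is the conditional-variance lower bound: with the lattice spacing $R/\sqrtn$ chosen so that $\sum_{\nu\ne\mu}P_N(z_\mu,z_\nu)\le\half$ (exactly the estimate \eqref{almost} proved in \S\ref{LBEST}), the normalized covariance matrix has all eigenvalues $\ge\half$, hence every conditional variance of $s_N(z_\nu^N)$ given the earlier values is $\ge\half\Pi_N(z_\nu^N,z_\nu^N)$, and the single-point bound $\E[\max(1,|X|^{-\lambda})\mid\fcal]\le 1+O(N^{-m})$ then holds uniformly regardless of the conditional mean, which is what makes your iterated-expectation Chernoff bound go through.
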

The relevance of this lemma to probability distributions of zero sets 
is clear from the First
Main Theorem of value distribution theory, which says that the growth of a
zero set can
be controlled by the growth rates of the maximum modulus and the proximity
$m_f(r,0)$ to zero. This relation was used in
\cite{ST2} to obtain hole probabilities for random entire functions from
large deviations estimates on maximum
moduli and on $m_f(r,0)$,  and was then adapted
in \cite{Zr} to holomorphic
functions on $\C^m$. A key step is to show that the spherical integrals of $
\log^-|s_N|$
are bounded by
$\de N$  for all sections $s_N\in H^0(M,L^N)$ outside a set of measure at most $
e^{-C_\de N^{m+1}}$ (Lemma \ref{log-}).  Theorem \ref{main} then follows 
immediately by an application of  the
Poincar\'e-Lelong formula.

We end the introduction by noting two natural questions for further work  in 
this area. The first
is  whether there exists  an exact asymptotic decay rate for
the hole probability in Theorem \ref{hole}. Secondly, we are studying the zeros 
of one holomorphic
section and obtain   large deviation estimates for the hypersurface volumes of 
the random complex hypersurfaces
$Z_{s_N} =\{s_N = 0\}$ in open sets $U$. It would be
interesting to obtain similar results  for the point process of simultaneous 
zeros of $m$
independent sections.

\section{Background}\label{background} We review in this section the definition
of our probability measures and background on the \szego kernel from \cite
{SZvar}.

Throughout this paper,   $(L,h)$ will be a positive Hermitian holomorphic line
bundle over a compact \kahler manifold $M$ of dimension $m$.  We let $e_L$ 
denote
a nonvanishing
local holomorphic section over an open set $\Omega\subset M$. The curvature form
of $(L,h)$ is given locally over $\Omega$ by
$$\Theta_h= -\ddbar \log |e_L|_h^2\;.$$ Positivity of $(L,h)$ means that the
curvature $\Theta_h$ is positive, and we give $M$ the \kahler form $\om_h:=\frac
i2 \Theta_h$.  (The Chern form of $L$ is given by $c_1(L,h)=\frac 1\pi \om_h=
\frac
i{2 \pi}
\Theta_h$.)  The Hermitian metric $h$ on $L$ induces Hermitian metrics
$h^N$ on the powers $L^N$ of the line bundle.

We give the space $H^0(M,L^N)$ of global holomorphic sections of $L^N$ the
Hermitian inner product
\begin{equation}\label{induced}\langle s_N, s'_N\rangle = \int_M h^N(s_N,
\overline{s'_N}) \,\frac 1{m!}\om^m\;,\qquad s_N, s'_N \in
H^0(M,L^N)\,,\end{equation} induced by the metrics
$h,\om$. This inner product in turn induces the Gaussian probability measure on
$H^0(M, L^N)$,
\begin{equation}\label{gaussian}d\gamma_N(s_N):=\frac{1}{\pi^m}e^
{-|c|^2}dc\,,\quad s_N=\sum_{j=1}^{d_N}c_jS^N_j\,,\quad
c=(c_1,\dots,c_{d_N})\in\C^{d_N}\,,\end{equation} where
$\{S_1^N,\dots,S_{d_N}^N\}$ is an orthonormal basis for
$H^0(M,L^N)$, and $dc$ denotes
$2d_N$-dimensional Lebesgue measure.   The measure
$\ga_N$ is called the {\it Hermitian Gaussian measure\/} on $H^0(M,L^N)$ and is
characterized by the property that the $2d_N$ real variables $\Re c_j, \Im c_j$
($j=1,\dots,d_N$) are independent Gaussian random variables with mean 0 and
variance
$1/2$; equivalently,
$$\E_N c_j = 0,\quad \E_N c_j c_k = 0,\quad  \E_N c_j \bar c_k =
\de_{jk}\,,$$ where $\E_N$ denotes the expectation with respect to the measure $
\ga_N$.

As in \cite{Z,BSZ, SZvar},  we lift sections $s_N\in H^0(M,L^N)$ to the circle
bundle
$X{\buildrel {\pi}\over \to} M$  of unit vectors in the dual bundle $L\inv\to M$
with respect to the dual metric $h\inv$. (Since $(L,h)$ is positive, $X$ is a
strictly pseudoconvex CR manifold.)  The lift $\hat s_N:X\to \C$ of the section
$s_N$ is given by
$$\hat{s}_N(\lambda) = \left( \lambda^{\otimes N}, s_N(z)
\right)\,,\quad\la\in \pi\inv(z)\,.$$  The sections $\hat s_N$ span the space
$\hcal^2_N(X)$ of $CR$ holomorphic functions on $X$ satisfying $\hat
s(e^{i\theta}x)= e^{iN\theta}\hat s(x)$. The {\it \szego projector\/} is the
orthogonal projector
$\Pi_N:\lcal^2(X)\to\hcal^2_N(X)$, which is given by the {\it
\szego kernel}
$$\Pi_N(x,y)=\sum_{j=1}^{d_N} \wh S^N_j(x)\overline{\wh S^N_j(y)}
\qquad (x,y\in X)\;.$$ (The lifts $\wh S^N_j$ of the orthonormal sections $S^N_j
$
form an orthonormal basis for $\hcal^2_N(X)$.) The \szego kernel is also known 
as
the ``two point function," since
\begin{equation}\label{2point} \E_N\left(\hat s_N(x)\overline{\hat s_N(y)}
\right)
= \sum_{j,k} \E_N(c_j \bar c_k) \wh S_j^N(x)\overline{\wh S_k^N(y)}
=\Pi_N(x,y)\,.\end{equation}

It was shown in \cite{Ca,Z} that the \szego kernel on the diagonal has the
asymptotics:
\begin{equation}\label{Ze}\Pi_N(x,x) =  \frac {N^m}{\pi^m}
+O(N^{m-1})\,.\end{equation} We shall apply the following form of the leading
part of the off-diagonal asymptotics of the \szego kernel from \cite
{SZsym,SZvar}. We write \begin{equation}\label{PN}P_N(z,w):=
\frac{|\Pi_N(x,y)|}{\sqrt{\Pi_N(x,x)}
\sqrt{\Pi_N(y,y)}}\,,\quad x\in\pi\inv (z), \ y\in\pi\inv(w)\,.\end{equation}

\begin{prop}\label{DPdecay} {\rm \cite{SZsym,SZvar}} Let $b>\sqrt{2k}$,
$k\ge 1$.  Then
$$ P_N(z,w) =\left\{\begin{array}{ll}  [1+o(1)] e^{-\frac
N2\,\dist(z,w)^2}\,,\qquad &\mbox{uniformly for }\ \dist(z,w)\le b\,\sqrt{\frac
{\log N}{N}} \\ O(N^{-k})\,, & \mbox{uniformly for }\ \dist(z,w)\ge b\,\sqrt
{\frac
{\log N}{N}}\end{array}\right. \;.$$
\end{prop}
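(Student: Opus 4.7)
The plan is to derive this proposition from the off-diagonal asymptotic expansion of the Szeg\H{o} kernel (of Boutet de Monvel--Sj\"ostrand type, sharpened in \cite{SZsym, SZvar}), which says that in preferred local coordinates near a point $z_0 \in M$ one has
$$\Pi_N(z,w) \;=\; \frac{N^m}{\pi^m}\,e^{N\psi(z,w)}\bigl(1+R_N(z,w)\bigr),$$
where $\Re\psi(z,w) = -\half\,\dist(z,w)^2 + O(\dist(z,w)^3)$ in a fixed small neighborhood of the diagonal, and the remainder satisfies $R_N = O(N^{-1})$ uniformly there. I would then split $M\times M$ into two regimes according to the scale $b\sqrt{\log N/N}$.

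In the near-diagonal regime $\dist(z,w)\le b\sqrt{\log N/N}$, the cubic error in $\Re\psi$ contributes $N\cdot O(\dist(z,w)^3) = O\bigl((\log N)^{3/2}N^{-1/2}\bigr) = o(1)$ to the exponent, so
$$e^{N\Re\psi(z,w)} \;=\; [1+o(1)]\,e^{-\frac{N}{2}\dist(z,w)^2}$$
uniformly. Combined with the diagonal asymptotic \eqref{Ze} giving $\sqrt{\Pi_N(x,x)\Pi_N(y,y)} = \frac{N^m}{\pi^m}[1+o(1)]$, and absorbing the imaginary part of $\psi$ into the absolute value in the definition \eqref{PN} of $P_N$, this yields the first conclusion.

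For the far regime I would subdivide once more. On the intermediate range $b\sqrt{\log N/N}\le \dist(z,w)\le \ep$, for $\ep$ a small fixed constant, the same parametrix still gives an upper bound $|\Pi_N(z,w)|\le C N^m e^{-cN\dist(z,w)^2}$ for any constant $c<\half$. Because $b>\sqrt{2k}$ we have $k/b^2<\half$, so we may choose $c$ with $cb^2>k$; then for $\dist(z,w)^2\ge b^2\log N/N$ the bound becomes $P_N(z,w)\le C N^{-cb^2}=O(N^{-k})$. On the complementary range $\dist(z,w)\ge \ep$, I would invoke the standard rapid off-diagonal decay $|\Pi_N(z,w)|=O(N^{-K})$ for every $K$, which follows from stationary-phase/non-stationary-phase analysis on the Boutet--Sj\"ostrand parametrix (the phase $\Re\psi$ is strictly negative and bounded away from zero there); dividing by $N^m$ only improves the exponent.

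The main obstacle is obtaining the error estimate $R_N=O(N^{-1})$ uniformly enough in the transition zone $\dist(z,w)\asymp \sqrt{\log N/N}$, where neither the pure Gaussian approximation nor the rapid-decay regime dominates. This uniformity, together with the precise quadratic form of $\Re\psi$ along the diagonal, is exactly what is established in \cite{SZsym, SZvar}, so the present proposition is a packaging of those results in the form convenient for the coherent state estimates of \S\ref{s-max}.
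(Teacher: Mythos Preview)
Your proposal is correct and aligns with the paper's treatment: the paper does not prove this proposition at all but simply cites it, stating that it ``comes from a combination of Propositions 2.6--2.7 of \cite{SZvar}, which are immediate consequences of the off-diagonal \szego kernel asymptotics in \cite{BSZ,SZsym}.'' Your sketch accurately unpacks what those cited results contain---the near-diagonal parametrix with Gaussian leading term, the intermediate regime handled by choosing $c\in(k/b^2,\half)$, and rapid decay far from the diagonal---so you are supplying the argument the paper omits by reference.
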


Proposition \ref{DPdecay} comes from a combination of Propositions 2.6--2.7 of
\cite{SZvar}, which are immediate consequences of the off-diagonal \szego kernel
asymptotics in \cite{BSZ,SZsym}. (For a short derivation of the \szego kernel
asymptotics using local reproducing kernels, see \cite{BBS}.)

In the following section, we use Proposition
\ref{DPdecay} to give a lower bound (which holds with ``rare" exceptions) for 
the
maximum modulus.  For this argument, we need the near-diagonal
estimate of Proposition \ref{DPdecay} for distances of order
$\sqrt{\log N}/\sqrt{N}$.

\section{Large deviations of the maximum modulus}\label{s-max}

For an open set $U \subset M$, we define the random variables
$$\mcal^U_N (s_N) = \sup_{z \in  U}\left\{ |s_N(z)|_{h^N}\right\} =
\sup_{\pi\inv(U)}|\hat s_N|\,,\qquad  s_N
\in H^0(M, L^N)\,. $$  The first step in our proof of Theorems
\ref{main}--\ref{volumes} is the following estimate of the probability of large
deviations of $\log \mcal^U_N$:
\begin{theo} \label{max}For $\de>0$, we have
$$\ga_N \left\{  \left|{\log \mcal^U_N(s_N)}
\right| \geq    \de N \right\} \leq e^{- C_{\delta,U} N^{m+1}} \qquad \forall\
N\gg 0.
$$
\end{theo}

We give below separate proofs of the estimate of the probabilities that
the upper bound $\de N$ and the lower bound $-\de N$  of $\log \mcal^U_N(s_N)$ 
are
violated.

\subsection{Upper bound estimate} The easy case is the upper bound.  We must 
show
that
\begin{equation}\label{UBE}\gamma_N \left\{ \mcal^U_N (s_N) > e^{\de N}\right\} 
<
e^{- C_{\de,U} N^{m+1}}\,.
\end{equation}
 This is a large deviations event since on  average $|s_N(z)|$ has polynomial
growth.

We denote by
$\Phi_N=(S^N_1:\cdots:S^N_{d_N}): M \to \CP^{d_N-1}$ the Kodaira embedding with
respect to an orthonormal basis $\{S_j^N\}$, , where $$d_N = \dim H^0(M,L^N) =
\frac {c_1(L)^m}{m!}N^m+O(N^{m-1})\,.$$ The Kodaira map $\Phi_N$ lifts to the 
map
$$\wh \Phi_N=(\wh S^N_1,\dots,\wh S^N_{d_N}):X\to\C^{d_N}\,.$$ We recall that
\begin{equation}\label{coherentstate} \Pi_N(x,y)=\sum_{j=1}^{d_N}
\wh S^N_j(x)\overline{\wh S^N_j(y)}=\langle
\wh \Phi_N(x),\wh\Phi_N(y)\rangle\,.
\end{equation} Let $s_N=\sum_{j=1}^{d_N}c_jS^N_j$  denote a random element of
$H^0(M,L^N)$, and write
$c=(c_1,\dots,c_{d_N})$. Then,
\begin{equation}\label{sNx}\hat s_N=
\sum_{j=1}^{d_N}c_j \wh S^N_j= c\cdot\wh\Phi_N\,,\end{equation} and thus
\begin{equation}\label{c}|\hat s_N(x)|=\left| c\cdot
\wh\Phi_N(x)\right|\le \|c\|\,{\| \wh \Phi_N(x)\|}
\,.\end{equation}  Recalling \eqref{Ze}, we have
\begin{equation}\label{K}\|\wh\Phi_N(x)\|^2 =  \Pi_N(x,x) =  \frac {N^m}{\pi^m}
+O(N^{m-1})\,.\end{equation}

By \eqref{c}--\eqref{K}, we have
\begin{eqnarray*} \ga_N\{s_N:  \mcal_N^U (s_N) > e^{\de N}\} &\le &
\ga_N\left\{c\in\C^{d_N}:\|c\|\sup\|\wh\Phi_N\| > e^{\de N}
\right\}\\ &\le &
\ga_N\left\{c\in\C^{d_N}:\|c\| >CN^{-m/2}e^{\de N}\right\}
\\ &\le &
\ga_N\left\{c\in\C^{d_N}:\max|c_j| >Cd_N^{-1/2}N^{-m/2}e^{\de N}\right\}\\ &\le 
&
e^{-C^2N^{-m}e^{2\de N}}\ \le \ e^{- e^{\de N}}\,,\qquad \mbox{for }\ N\gg 0\,,
\end{eqnarray*} which gives a much better upper bound estimate than \eqref{UBE}.

\subsection{\label{LBEST} Lower bound estimate}

We now apply the \szego kernel asymptotics of Proposition \ref{DPdecay} to
prove the large deviations estimate on the lower bound:
\begin{equation}\label{LBE}\gamma_N \{ \mcal^U_N (s) < e^{-\de N} \} < e^{-
C_{\de,U} N^{m+1}}\,.\end{equation}

To verify \eqref{LBE}, we choose a point $z_0\in U$ and  a $2m$-cube
$[-t,t]^{2m}$ centered at the origin in $T_{z_0}M\equiv \R^{2m}$. We choose $t$
sufficiently small so that $\exp_{z_0}\big([-t,t]^{2m}\big)\subset U$ and
$$\half \|v-w\|\le
\dist(\exp_{z_0} (v), \exp_{z_0}(w)) \le 2 \|v-w\|\,,
\quad\mbox{for }\ v,w\in [-t,t]^{2m}\;.$$  For each $N>0$,  we consider the
lattice of $n$ points $\{z^N_\nu\}$ in $U$ given by
\begin{equation}\label{lattice} z^N_\nu = \exp_{z_0}\left( \frac a\sqrtn
\,\nu\right),\quad
\nu\in \Ga_N:=\left\{(\nu_1,\dots,\nu_{2m})\in
\Z^{2m}:|\nu_j|\le
\frac {t\,\sqrtn}{a}\right\}\,,\end{equation}
 where $a$ is to be chosen sufficiently large.  The number of points is given by
\begin{equation}\label{n} n = \left(2\left\lfloor
\frac {t\,\sqrtn}{a}\right\rfloor+1\right)^{2m} = \left( \frac
{2t}{a}\right)^{2m}N^m+ O(N^{m-1/2})\,.\end{equation}   Choosing points
$\la_\nu^N\in X$ with
$\pi(\la_\nu^N)= z_\nu^N$, we consider the complex Gaussian random variables
$$\xi_\nu= \frac {\hat s_N (\la_\nu)}{\Pi_N(\la_\nu,\la_\nu)^{1/2}} =
\frac {\left(\la_\nu^{\otimes N},
s_N(z_\nu)\right)}{\Pi_N(\la_\nu,\la_\nu)^{1/2}}\,,$$ where we omit the
superscript $N$ to simplify notation. We note that $\xi_{\nu} = \langle \hat{s}
_N,
\Phi_N^{\lambda_{\nu}^N} \rangle$
where $\Phi_N^y(x) = \frac{\Pi_N(x,y)}{ \sqrt{\Pi_N(y,y)}}$ is the coherent 
state
centered at $y$.
 Since $\Pi_N(\la_\nu,\la_\nu)^{1/2} \approx
\left(\frac N\pi \right)^{m/2}$, it suffices to show that
\begin{equation}\label{w} \gamma_N \left\{\textstyle \max_\nu |\xi_\nu| < e^{-
\de
N}
\right\} < e^{- C_{\de,U} N^{m+1}}\,, \quad \mbox{for }\ N\gg
0\,.\end{equation}

We note that $\E_N(|\xi_\nu|^2)=1$, i.e. the $\xi_\nu$ are standard complex
Gaussians.   The $\xi_\nu$ are not independent random variables; instead, we now
apply the off-diagonal asymptotics of the \szego kernel to show that they are
``almost independent" in the sense that the covariances $\E_N(\xi_\mu\bar\xi_
\nu)$
are sufficiently small (for $\mu \neq \nu$). By \eqref{2point} and \eqref{PN},
these covariances satisfy
\begin{equation}\left|\E_N\left(\xi_\mu\bar\xi_\nu\right)\right|=
\frac{|\Pi_N(z_{\mu}, z_{\nu})|}{\sqrt{\Pi_N(z_{\mu}, z_{\mu})}
\sqrt{\Pi_N(z_{\nu}, z_{\nu})}} =P_N(z_{\mu}, z_{\nu})\,.\end{equation}

We now verify \eqref{w}: Let $$\De = (\De_{\mu\nu}), \qquad
\De_{\mu\nu}=\E_N\left(\xi_\mu\bar\xi_\nu\right),\qquad  \mu,\nu\in\Ga_N$$
denote the covariance matrix. Then by Proposition
\ref{DPdecay}, for $N\gg 0$ we have
\begin{equation}\label{nearfar} |\De_{\mu\nu} |\le \left\{\begin{array}{ll}
2\,e^{-\frac N2 \,\dist(z_\mu,z_\nu)^2} \quad & \mbox{if }\ \dist(z_\mu,z_\nu)
\le b\sqrt{\frac {\log N}N} \\ O(N^{-m-1})& \mbox{if }\
\dist(z_\mu,z_\nu)
\ge b\sqrt{\frac {\log N}N}\end{array}
\right.\ ,\end{equation}
where $b=\sqrt{2m+3}$.

Inspired by  the almost independence result of  \cite[Lemma 2.3]{NSV}, we claim
that for all $\eta>0$, we can choose the constant $a$ in \eqref{lattice} such 
that
for each fixed  $\mu\in\Ga_N$,
\begin{equation}\label{almost} \sum_{\nu\neq\mu}| \De_{\mu\nu}| \le \frac
12\,,\quad \mbox{for }\ N\gg 0\,. \end{equation} (Actually, we can make the sum
smaller than any positive number.)

Proof of \eqref{almost}: In equation \eqref{almost} and in the following, we fix
$\mu$; all sums are over $\nu$ only. By
\eqref{nearfar}, we have
$$ \sum_{\nu\neq\mu}| \De_{\mu\nu}| \le   \sum_{near} +  O(N\inv) \,,$$ where
\begin{eqnarray}
\sum_{near} &: =& \sum\left\{\De_{\mu\nu}:0<\dist(z_\mu,z_\nu)
\le \textstyle b\sqrt{\frac {\log N}N}\right\}\nonumber \\ &\le &\sum\left\{
2e^{-\frac N2 \,\dist(z_\mu,z_\nu)^2}:  0<\dist(z_\mu,z_\nu)
\le \textstyle b\sqrt{\frac {\log N}N}\right\}\,.\label{near}\end{eqnarray}
Since $
\dist(z_\nu,z_\mu)>\frac a{2\sqrtn}\|\nu-\mu\|$, we have for $a\gg 1$,
\begin{eqnarray*}\sum_{near}& \le &\sum_{\nu\ne\mu} 2 e^{-a^2\|\nu-\mu\|^2/8}
\ =\  \sum_{\nu\ne 0} 2 e^{-a^2\|\nu\|^2/8}
\ \le\ \sum_{\nu \ne 0}C_m \int_{\{\|x-\nu\| \le 1/3\}}
e^{-a^2\left(\|x\|-1/3\right)^2/8}\,dx
\\ &\le & C_m\int_{\{\|x\| \ge 2/3\}}  e^{-a^2\left(\|x\|-1/3\right)^2/8}\,dx
\ \le\ C_m\int_{\{\|x\| \ge 2/3\}} e^{-a^2\|x\|^2/{32}}\,dx \ \le C_m'
e^{-a^2/72} <\frac 12
\end{eqnarray*} (where $C_m,C_m'$ are constants depending only on $m$), which
verifies
\eqref{almost}.

We consider the
$\ell^\infty$ norm on $\C^n$, $$\bbb v\bbb:=\max_\mu |v_\mu|\,,\qquad
v=(v_1,\dots,v_n)\in\C^n\,,$$ which is implicit in \eqref{w}.  Write
$\De=I+A$. We note that $\De_{\mu\mu}=\E_N(|\xi_\mu|^2)=1$ and hence the 
diagonal
entries of $A$ vanish.    By
\eqref{almost}, for $N\gg 0$ we have
$$ \Big|\sum_\nu A_{\mu\nu}v_\nu\Big| \le \sum_\nu |A_{\mu\nu}|\,\bbb v\bbb \le
\frac 12\,\bbb v\bbb\,,\qquad v\in\C^n\,,$$ and hence
$$\bbb Av\bbb = \max_\mu  \Big|\sum_\nu A_{\mu\nu}v_\nu\Big|
\le \frac 12 \,\bbb v\bbb\,,\qquad\therefore\ \bbb\De v\bbb \ge \bbb v\bbb -
\bbb Av\bbb
\ge
\frac 12 \,\bbb v\bbb\,.$$  It follows that the eigenvalues of the covariance
matrix $\De$ are bounded below by $\half$. Therefore,
$\De$ is invertible and the eigenvalues of $\De^{-1/2}$ are bounded above by
$\sqrt 2$.

We now write $\xi=(\xi_\nu)\in\C^n$, and we consider
\begin{equation}\label{zeta}\zeta=(\zeta_\nu):=
\De^{-1/2}\xi\,,\end{equation} so that the
$\zeta_\mu$ are independent standard complex Gaussian random variables.
Furthermore, we have $$\bbb\zeta\bbb =\bbb\De^{-1/2} \xi\bbb \le
\|\De^{-1/2}\xi\| \le\sqrt 2 \|\xi\| \le \sqrt{2n}
\,\bbb \xi\bbb\,.$$ Recalling \eqref{n}, we let $C=(2t/a)^{2m}$ so that
$n\approx CN^m$, and thus
\begin{equation}\label{Linfty}\max_\mu |\zeta_\mu| \le \sqrt
{2n}\,\max_\mu|\xi_\mu|
\le
\sqrt{3C} \,N^{m/2}\, \max_\mu|\xi_\mu|\;,\end{equation} for $N\gg 0$.  Writing
$\ep_N = \sqrt{3C}\,N^{m/2} e^{-\de N}$, we then have
\begin{eqnarray*} \mbox{Prob}\left\{\max |\xi_\mu| \le e^{-\de N}\right\} &=&
\frac 1{\pi^n} \int_{\max|\xi_\mu|\le e^{-\de N}} e^{-\|\zeta\|^2}
\,d\zeta\\ &\le &  \frac 1{\pi^n} \int_{\max|\zeta_\mu|\le \ep_N} e^{-\|\zeta\|
^2}
\,d\zeta\\  &\le &  \frac 1{\pi^n}
\int_{|\zeta_1| \leq \epsilon_N} \cdots
\int_{|\zeta_n| \leq \epsilon_N} d\zeta\\ &=& (\ep_N)^{2n}\ \le\ (\ep_N)^{C
N^{m}}\\&=& \exp\left(\left[-\de N +\frac m2 \log N +\log \sqrt{3C}\right]
C N^m\right) \\&\le &
\exp\left(-\half\de C N^{m+1}\right)\,,\qquad\qquad \mbox{for }\ N\gg
0\;.\end{eqnarray*}
This verifies the lower bound estimate \eqref{LBE} and
completes the proof of Theorem
\ref{max}.\qed

\section{\label{MAINRESULTS} Proof of the main results}

\subsection{Proof of Lemma \ref{both} }\label{s-integral}

In  this section we use Theorem \ref{max}  to prove Lemma \ref{both}.
We recall that
$$\log^+t = \max(\log t,0)\,,\quad \log^-t: = \log^+\frac 1t= \max(-\log
t,0)\;,$$
and we use the identity $|\log t| = \log^+t+\log^-t$ to split the 
integrand of the
lemma into two parts.  Theorem \ref{max} immediately yields the bound
\begin{equation}\label{upper} \ga_N\left\{\int_M
\log^+|s_N|_{h^N}\ge \frac \de 2 N
\right\}
\leq e^{-C_\de  N^{m+1}}\,.\end{equation}  Thus it suffices to show
that\begin{equation}
\label{lower}\ga_N\left\{\int_M \log^-|s_N|_{h^N}\ge\frac \de 2 N
\right\}
\leq e^{-C_\de  N^{m+1}}\,.\end{equation}

To prove \eqref{lower}, we shall show that the
integrals of
$\log^-|s_N|_{h^N}$ over spheres are bounded above by $
\delta N$ when $s_N$ lies outside a small set.  Let $U\subset M$ be a 
coordinate neighborhood with holomorphic coordinates 
$z=(z_1,\dots,z_m):U\approx B(4)$, where 
$B(r)=\{z\in\C^m: \|z\|<r\}$ denotes the ball of radius $r$ in $\C^m$. 
We have the following bound on the spherical integrals:

\begin{lem}\label{log-} For all $\de>0$, there exist a positive constant $C_\de$
and measurable sets $E_{N,\de}\subset H^0(M,L^N)$ such that
$\ga_N(E_{N,\de})<e^{-C_\de N^{m+1}}$ and
$$ \int_{\{\|z\|=r\}} \log^-|s_N|_{h^N}\, d\sigma_r\le\de N \qquad \mbox{for }\
s_N\in H^0(M,L^N)\sm E_{N,\de},\   r\in[1,3],\ N\gg 0\;,$$ where  $\sigma_r$
denotes the invariant probability measure on the sphere $\{\|z\|=r\}$.
\end{lem}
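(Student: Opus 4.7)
The plan is to combine the two-sided maximum modulus control from Theorem~\ref{max} with the Poisson--Jensen formula applied to an auxiliary plurisubharmonic function obtained from a local trivialization. Work in the coordinates $z\colon U\approx B(4)$, choose a local holomorphic frame $e_L$ of $L$ over $U$, and set $\varphi=-\log|e_L|_h$, so that $|s_N|_{h^N}=|f_N|\,e^{-N\varphi}$ where $f_N=s_N/e_L^N$ is holomorphic and $\log|f_N|$ is plurisubharmonic.

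For a small parameter $\ep>0$ to be chosen in terms of $\de$ and the geometry of $U$, I would apply Theorem~\ref{max} to the subball $B(1/2)\subset U$ to obtain, outside an event of probability at most $e^{-C_\ep N^{m+1}}$, a point $z_\ast\in B(1/2)$ with $|s_N(z_\ast)|_{h^N}\ge e^{-\ep N}$, and to $U$ itself to obtain $\mcal_N^U(s_N)\le e^{\ep N}$; $E_{N,\de}$ will be the union of these two exceptional sets. On its complement, apply the Poisson inequality
\[
\log|f_N(z_\ast)|\;\le\;\int_{\|y\|=r}P_r(z_\ast,y)\,\log|f_N(y)|\,d\sigma_r(y)
\]
for $\log|f_N|$ on $B(r)$, $r\in[1,3]$. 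Since the Poisson kernel is uniformly bounded $0<c_1\le P_r(z_\ast,\cdot)\le c_2$ for $z_\ast\in\bar B(1/2)$, splitting $\log|f_N|=\log^+|f_N|-\log^-|f_N|$ and rearranging gives
\[
c_1\int \log^-|f_N|\,d\sigma_r\;\le\;c_2\int \log^+|f_N|\,d\sigma_r-\log|f_N(z_\ast)|.
\]
Using $\log^+|f_N|\le\log^+|s_N|_{h^N}+N\varphi^+\le\ep N+N\|\varphi\|_\infty$, $\log|f_N(z_\ast)|\ge-\ep N+N\varphi(z_\ast)$, and converting back via $\log^-|s_N|_{h^N}\le\log^-|f_N|+N\varphi^+$, produces an estimate of the shape $A\ep N+BN$ with $A,B>0$ depending only on $\varphi$ and the Poisson kernel.

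The main obstacle will be eliminating the geometric remainder $BN$, which does not vanish with $\ep$ because of the discrepancy between $|s_N|_{h^N}$ and $|f_N|$. To absorb it I would refine the argument by replacing the Poisson inequality with the Poisson--Jensen \emph{identity}, picking up the nonnegative Green's-function contribution $\sum_j G_r(z_\ast,a_j)$ from the zeros $a_j$ of $f_N$ in $B(r)$. Since $\E\sum_j G_r(z_\ast,a_j)=N\int G_r(z_\ast,\cdot)\,\om_h/\pi$, which exactly cancels the $\varphi$-remainder, it suffices to show that $\sum_j G_r(z_\ast,a_j)$ is not less than $N\int G_r(z_\ast,\cdot)\om_h/\pi-\de N$ outside an event of probability $e^{-CN^{m+1}}$. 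This \emph{undercrowding} estimate is the analytic heart of the argument; I expect it to follow from a coherent-state/lattice argument on the almost-independent coefficients $\xi_\nu$ of \S\ref{LBEST}, since a severe undercrowding of zeros would force $\log|s_N|_{h^N}$ to be approximately harmonic on $B(r)$ and hence to conflict with the independent fluctuations of the $\xi_\nu$'s guaranteed by the \szego-kernel asymptotics. Once this cancellation is established, choosing $\ep=\de/K$ for an appropriate $K$ yields the claim.
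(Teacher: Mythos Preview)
Your opening matches the paper: pick a local frame, pass to the holomorphic $f_N$, use Theorem~\ref{max} to get a single point $z_\ast$ with $\log|s_N(z_\ast)|_{h^N}\ge -\ep N$, and apply the Poisson inequality on $B(r)$ to bound $\int\log^-|f_N|\,d\sigma_r$.  In fact the paper carries out exactly this single--point argument as a preliminary step (their claim~\eqref{claim}), and arrives at the same conclusion you do: it only gives $\int\big|\log|f_N|\big|\,d\sigma_r\le K_1N$ with a geometric constant $K_1$ that does \emph{not} shrink with $\ep$.

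The gap is in your proposed cure for the $BN$ term.  Replacing the Poisson inequality by the Poisson--Jensen identity introduces the Green term $\sum_j G_r(z_\ast,a_j)$, and you then need a lower large--deviation bound on this weighted zero count of the form $\sum_j G_r(z_\ast,a_j)\ge \frac N\pi\int G_r(z_\ast,\cdot)\,\om_h-\de N$ outside probability $e^{-CN^{m+1}}$.  But this is precisely the kind of ``linear statistic'' undercrowding estimate that Theorem~\ref{main} provides---and Theorem~\ref{main} is deduced \emph{from} the present lemma via Lemma~\ref{both}.  Your suggested direct route through the coherent states of \S\ref{LBEST} is only a heuristic: those give control on $\max_\nu|\xi_\nu|$, not on integrals of $\log|s_N|$ or on zero counts, and the remark about $\log|s_N|_{h^N}$ being ``approximately harmonic'' does not by itself yield a contradiction with the almost--independence of the $\xi_\nu$.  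A further technical snag is that $z_\ast$ is random (it depends on $s_N$), so even if one had Theorem~\ref{main} available it could not be applied with the $z_\ast$--dependent weight $G_r(z_\ast,\cdot)$.

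The paper eliminates the $BN$ remainder by a different, elementary device that avoids zeros altogether.  Instead of one reference point, one takes a $\de^{2m+2}$--fine lattice of points $\zeta_{kj}$ covering the annulus $\{1/2\le\|z\|\le 3\}$, each satisfying $\log|s_N(\zeta_{kj})|_{h^N}\ge -\de N$ by a separate application of Theorem~\ref{max}.  Because the $\zeta_{kj}$ sit at distance $\approx\de$ from the sphere $\{\|z\|=r\}$ and are $\de^{2m+2}$--dense on it, the Poisson kernels satisfy $\big|\sum_k\mu_k P_r(\zeta_{kj},z)-1\big|\le C_m\de$, yielding
\[
\int\log|f_N|\,d\sigma_r\ \ge\ \sum_k\mu_k\log|f_N(\zeta_{kj})|\ -\ C_m\de\int\big|\log|f_N|\big|\,d\sigma_r,
\]
while the Riemann sum $\sum_k\mu_k\,\varphi(\zeta_{kj})$ agrees with $\int\varphi\,d\sigma_r$ to within $O(\de)$.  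Adding these two lines, the $\varphi$--contributions cancel and one is left with $\int\log|s_N|_{h^N}\,d\sigma_r\ge -\de N-C_m\de\cdot K_1N-O(\de)N$.  The preliminary $K_1N$ bound now enters only multiplied by $\de$, which is why both steps are needed.  This many--point Poisson comparison is the missing idea in your proposal.
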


\begin{proof} The proof given here mostly follows the
proofs in \cite{ST2,Zr},  which  implicitly use the radial
metric
$h=e^{-r^2}$. Since our metric is not radial and since we
require the exceptional sets
$E_{N,\de}$ to be independent of $r$, we need to modify the
arguments of \cite{ST2,Zr}. For example, we shall  subdivide
the radial interval
$[1,3]$, as well as the spheres, when applying the inequality
\eqref{poisson} below.

We begin with a deterministic estimate:
Decompose the unit sphere $\d B(1)$ into a disjoint union of sets
$I_1,\dots, I_q$ of diameter
$\le \delta^{2m+2}$. (The number $q$ depends on $\de$; for an optimal
decomposition, $q \sim \de^{-(2m-1)(2m+2)}$, but this estimate for $q$ is
unimportant and any decomposition with this diameter bound will do.) Suppose
that $r\in [1,3]$ and
$\de\in(0,\half)$, and let $\zeta_k\in\C^m$ such that
\begin{equation}\label{zetak}
\dist\big(\zeta_k,(r-\de) I_k\big)<\de^{2m+2}\,,\end{equation}  for
$1\le k\le q$.
Let $u$ be a subharmonic function on the  ball $B(4)$.   The following
estimate is given in  the proofs in \cite{ST2,Zr}:
\begin{equation}\label{poisson}\int_{\{\|z\|=r\}} u(z)\,d\sigma_r(z) \ge
\sum_{k=1}^q \mu_k\,u(\zeta_k) - C_m\de \int
_{\{\|z\|=r\}}|u(z)|\,d\sigma_r(z)\,,\end{equation}  where
$\mu_k=\sigma_1(I_k)$ (so that $\sum \mu_k=1$), and $C_m$ is a constant
depending only on $m$.

For completeness, we provide a proof of \eqref{poisson} here:
Let
$P_r(\zeta,z)= r^{2m-2}
\frac{r^2-\|\zeta\|^2}{\|\zeta-z\|^{2m}}$ denote the Poisson kernel for the
$r$-ball
$B(r)$, normalized so that  $\psi(\zeta)=\int
P_r(\zeta,z)\psi(z)\,d\sigma_r(z)$ for harmonic functions $\psi$.  Since $u$ is
subharmonic, we have
\begin{eqnarray}
\sum_{k=1}^q \mu_ku(\zeta_k) &\le & \int _{\{\|z\|=r\}}\sum_{k=1}^q
\mu_k P_r(\zeta_k,z) u(z)\,d\sigma_r(z)\nonumber
\\ &\le &  \int\left|\sum_{k=1}^q
\mu_k P_r(\zeta_k,z) -1\right|\, |u(z)|\,d\sigma_r(z) +  \int
u(z)\,d\sigma_r(z)\,.\label{p2}\end{eqnarray}
Next we bound the quantity  $\sum \mu_k P_r(\zeta_k,z) -1$. By the
O$(2m)$-invariance of
$P_r(\zeta,z)$,  we have
$\int P_r(\zeta,z)\,d\sigma_s(\zeta)=1$ for
$0<s<r=\|z\|$.  Since $\|\zeta_k -\zeta\|< 4\de^{2m+2}$ for $\zeta\in
(r-\de)I_k$, we have for
$\|z\|=r$,
\begin{eqnarray*}   \left|\sum_{k=1}^q
\mu_k P_r(\zeta_k,z) -1\right| &=& \left|\sum_{k=1}^q
\mu_k P_r(\zeta_k,z) - \int_{\{\|\zeta\|=r-\de\}}
P_r(\zeta,z)\,d\sigma_{r-\de}(\zeta)\right|\\
&\le & \sum_{k=1}^q \mu_k \int_{(r-\de)I_k} |P_r(\zeta_k,z) -
P_r(\zeta,z)|\,d\sigma_{r-\de}(\zeta)\\ &\le & 4\de^{2m+2}
\sup_{\|\zeta\|\le r-\de/2} \|d_\zeta P_r(\zeta,z)\|\;.\end{eqnarray*} Since
$\|d_\zeta P_r(\zeta,z)\| \le C'_m r^{2m}
\|\zeta-z\|^{-2m-1}$, we conclude that
\begin{equation}\label{pest} \left|\sum_{k=1}^q
\mu_k P_r(\zeta_k,z) -1\right|\le C_m\de\,, \qquad \mbox{for }\
\|z\|=r\;.\end{equation} The inequality
\eqref{poisson} follows from \eqref{p2}--\eqref{pest}.

We now use the following notation:  $A(N,r,\de)  \lesssim B(N,r,\de)$
means that for  all $\de\in (0,\half)$, there
exist  a positive integer $N_0(\de)$, a positive constant $C_\de$, and sets
$E_{N,\de}\subset H^0(M,L^N)$ of  measure
$<e^{-C_{\delta} N^{m+1}}$ such that
$A(N,r,\de)
\le B(N,r,\de)$  whenever $s_N$ is not in $E_{N,\de}$, for
 $N\ge N_0(\de)$, $r\in [1,3]$. (The constants $N_0(\de)$,
$C_\de$ and exceptional sets
$E_{N,\de}$ are independent of $r$.)  We note that the relation  $\lesssim$ is
transitive; furthermore, if  $A_1  \lesssim B_1$ and
$A_2
\lesssim B_2$, then  $A_1 + A_2  \lesssim B_1
+B_2$.  We also write
$A(N,r,\de)  \gtrsim B(N,r,\de)$) when  $B(N,r,\de)  \lesssim
A(N,r,\de)$.

Let $s_N\in H^0(M,L^N)$ and write $s_N=f_N\,e_L^{\otimes N}$, where $e_L$ is a
local frame over $U$.  We claim that
\begin{equation}\label{claim} \int _{\{\|z\|=r\}} \big|\log
|f_N(z)|\,\big|\,d\sigma_r(z) \lesssim K_1N\,.\end{equation} Here, and in the
following, $K_1,K_2,K_3,K_4$ denote  constants independent of $\de,N,r$ (but
depending on $M,L,h,U$).

We let $\al(z)=\log |e_L(z)|^2_h$ so that
\begin{equation}\label{norm} \log |s_N|_{h^N} = \log |f_N|+ \frac
N2\,\al\;.\end{equation} To prove  \eqref{claim}, we   note that it follows from
the upper bound estimate
\eqref{upper} that
\begin{equation}\label{log+}\int _{\{\|z\|=r\}} \log^+
|s_N(z)|_{h^N}\,d\sigma_r(z)\le\log\mcal^U_N(s_N)\lesssim \de N\,,\end{equation}
and therefore
\begin{equation}\label{log+f}\int _{\{\|z\|=r\}} \log^+
|f_N|\,d\sigma_r\le \int _{\{\|z\|=r\}} \log^+
|s_N|_{h^N}\,d\sigma_r +\frac N2\int _{\{\|z\|=r\}}|\al|\,d\sigma_r
\lesssim  K_2N.\end{equation}

By  Theorem \ref{max},  we can choose a point
$\zeta_0\in B(1/4)$  such that
$\log|s_N(\zeta_0)|_{h^N}\ge -N$, and thus by \eqref{norm},
$\log |f_N(\zeta_0)|\ge -K_3N$, unless $s_N$  lies  in a
set of measure $\le\exp(-C_{1,B(1/4)}N^{m+1})$. By the Poisson formula,
$$\log |f_N(\zeta_0)|+ \int_{\{\|z\|=r\}} P_r(\zeta_0,z) \log ^-|
f_N(z)|\,d\sigma_r(z) \le \int_{\{\|z\|=r\}} P_r(\zeta_0,z) \log ^+|
f_N(z)|\,d\sigma_r(z) \,.$$ Let $C\in\R^+$ such that
$$C\inv \le P_r(\zeta_0,z) \le C\,,\qquad \mbox{for }\ \|\zeta_0\|\le 1/4\,,\
\|z\|=r\in [1/2,3]\,.$$  Therefore,
\begin{eqnarray*}C\inv\int_{\{\|z\|=r\}}  \log ^-| f_N(z)|\,d\sigma_r(z)& \le&
\int_{\{\|z\|=r\}} P_r(\zeta_0,z) \log ^-| f_N(z)|\,d\sigma_r(z)\\& \le & C
\int_{\{\|z\|=r\}} \log ^+| f_N(z)|\,d\sigma_r(z) -\log |f_N(\zeta_0)|\\ &
\lesssim & CK_2N+K_3N\;,\end{eqnarray*} which together with \eqref{log+f} yields
the claim \eqref{claim}.

We now construct an open covering $\{U_{kj}\}$ of the annulus $\{
1/2\le\|z\|\le 3\}$ as follows: Let
$r_j=\frac 12 +\frac 56
\de^{2m+2}j$, for
$0\le j\le p:=\lceil 3\de^{-2m-2}\rceil$. Let   $I_1,\dots,
I_q$ be disjoint sets  of diameter
$\le \delta^{2m+2}$ decomposing the unit sphere $\d B(1)\subset \C^m$, as
above. Then the open  sets
$$U_{kj}: =\{z\in
\C^m:
\dist(z,r_j I_k)< \half\de^{2m+2}\}\,,\qquad 1\le k\le q,\ 1\le j\le p\,,$$ 
cover
the annulus $\{ 1/2\le\|z\|\le 3\}$.

Next we apply Theorem \ref{max} to choose points
$\zeta_{kj}\in U_{kj}$ such that
$\log|s_N(\zeta_{kj})|_{h^N}>-\de\,N$ for all $k, j$, unless $s_N$ lies in an
exceptional set $E_{N,\de}$ of measure $$\ga_N(E_{N,\de}) \le \sum_{k=1}^q
\sum_{j=1}^p\exp({-C_{\de,U_{kj}}N^{m+1}}) \le e^{-C_\de N^{m+1}}\qquad
\mbox{for }\ N\gg 0\,.$$  Let
$r\in [1,3]$,
$\de
\in (0,\half]$ be fixed.  Choose $j$ such that
$|r-\de-r_j|<
\half\de^{2m+2}$.  Then
$$z\in rI_k\implies |z-\zeta_{kj}|\le \left|z- \frac{r_j}{r}\,z\right| +
\left|\frac{r_j}{r}\,z-
\zeta_{kj}\right| < |r-r_j| + \frac 12 \de^{2m+2} + 3  \de^{2m+2}<\de + 4
\de^{2m+2}<2\de.$$ Thus
\begin{equation}\label{elem}\int_{\{\|z\|=r\}} \al(z)\,d\sigma_r(z) = \sum_{k=1}
^q
\int_{rI_k} \al\,d\sigma_r \ge
\sum_{k=1}^q \mu_k\al(\zeta_{kj}) - 2\de \sup |d\al|\,.\end{equation}
Since $$\dist\big(\zeta_{kj},(r-\de)I_k\big)  \le
\dist\big(\zeta_{kj},r_jI_k\big) +|r-\de-r_j|  \le \half \de^{2m+2} +\half
\de^{2m+2} =  \de^{2m+2}\,,$$ we have by
\eqref{poisson},
\begin{equation}\label{poisson2}\int_{\{\|z\|=r\}} \log |f_N(z)|\,d\sigma_r(z) 
\ge
\sum_{k=1}^q \mu_k\log|f_N(\zeta_{kj})| - C_m\de \int _{\{\|z\|=r\}} \big|\log
|f_N(z)|\,\big|\,d\sigma_r(z)\,,\end{equation}
Recalling \eqref{norm}, we combine \eqref{elem}--\eqref{poisson2} to conclude
that
\begin{eqnarray*}\int_{\{\|z\|=r\}} \log |s_N|_{h^N}\,d\sigma_r &=&
\int_{\{\|z\|=r\}} \log |f_N|\,d\sigma_r + \frac N2 \int_{\{\|z\|=r\}}
\al\,d\sigma_r
\\&
\gtrsim &
\sum_{k=1}^q \mu_k\log|s_N(\zeta_{kj})|_{h^N} - C_m\de \int_{\{\|z\|=r\}}
\big|\log |f_N|\,\big|\,d\sigma_r -N\de\sup|d\al|\,,\end{eqnarray*} Thus by
\eqref{claim} and the choice of the $\zeta_{kj}$,
\begin{equation}\label{r1} \int_{\{\|z\|=r\}} -\log |s_N|_{h^N}\,d\sigma_r
\lesssim
\de N + C_m\de K_1 N +N\de\sup|d\al| =K_4\de N\,.\end{equation} Therefore by
\eqref{log+} and \eqref{r1},
\begin{eqnarray*}\int_{\{\|z\|=r\}}
\log^-\|s_N\|_{h^N}\, d\sigma_r &=&\int_{\{\|z\|=r\}}
-\log\|s_N\|_{h^N}\, d\sigma_r+
\int_{\{\|z\|=r\}}
\log^+\|s_N\|_{h^N}\, d\sigma_r\\ & \lesssim &K_4\de N +\de N
\,.\end{eqnarray*} \end{proof}

We now use Lemma \ref{log-} to verify the estimate \eqref{lower}:  Since it is
difficult to control the exceptional set for the spherical integral in the
lemma as the radius  $r\to 0$, we cover $M$ by a finite number of 
coordinate annuli of the form
$B(3)\sm B(1)$. Integrating the inequality of
Lemma \ref{log-} over $1\le r\le 3$, we conclude that
$$\int_{B(3)\sm
B(1)}\log^-|s_N|_{h^N}
\le K\de N\,, \quad
\mbox{for }\ s_N\in H^0(M,L^N)\sm E_{N,\de}\,.$$ The estimate  
\eqref{lower} follows by summing over the annuli.  This completes the proof of
Lemma~\ref{both}.\qed

\newpage
\subsection {Completion of the proofs.} 
\subsubsection{Proof of Theorem
\ref{main}.} We let $\phi\in\dcal^{m-1,m-1}(M)$ be an arbitrary test form.  
By  the Poincar\'e-Lelong formula
\begin{equation} \frac{\sqrt{-1}}{\pi } \partial
\bar{\partial}\log|s_N|_{h^N}= [Z_{s_N}]- \frac N\pi\, \om_h\;,
\label{PL}\end{equation} we have
 \begin{eqnarray} \nonumber \left| \int_{Z_{s_N}}\phi - \frac N\pi\int_M
\om_h\wedge
\phi \right| &=& \left|\left(\frac{\sqrt{-1}}{
\pi } \partial
\bar{\partial}\log|s_N|_{h^N},\,\phi\right)\right|\\ \nonumber &=& \frac 1\pi
\left|\int_M\log|s_N|_{h^N} \,\ddbar\phi\right|
\\ \nonumber&\le & \frac 1\pi
\int_{M}\big|\log|s_N|_{h^N}\big|\,|\ddbar\phi|\, d\vol_M\\ & \le& \frac
{\sup|\ddbar\phi|}\pi
\int_{M}\big|\log|s_N|_{h^N}\big|\,d\vol_M\,,\label{a}
\end{eqnarray} where
$d\vol_M =\frac 1{m!}\om_h^m$ is the volume form on $M$.
The conclusion of Theorem
\ref{main} follows by combining Lemma \ref{both} and \eqref{a}.\qed

\subsubsection{Proof of Theorem \ref{volumes}.}\label{PROOFMAIN} 
We let $\de>0$ be arbitrary, and we choose
$\psi_1,\psi_2\in\ccal^\infty_\R(M)$ such that
$$0\le \psi_1 \le \chi_U \le \psi_2\le 1,\quad \int_M\psi_1\,d\vol_M \ge
\vol_{2m}(U)-\de, \quad  \int_M\psi_2\,d\vol_M \le
\vol_{2m}(\overline U)+\de\,.$$
We now let $\phi_j=
\frac{\psi_j}{(m-1)!}\,\om_h^{m-1}$, for $j=1,2$.   For $s_N$ not in an
exceptional set of measure $<e^{-C_{\phi_2}N^{m+1}}$ (note that
$C_{\phi_2}$ depends on
$\de$ and $U$), we have by Theorem \ref{main},
\begin{eqnarray}\vol_{2m-2}(Z_{s_N}\cap U) &=& \int_{Z_{s_N}} \chi_U
\frac{\om_h^{m-1}}{(m-1)!}\
\le\ \int_{Z_{s_N}}\phi_2\ \le\ \frac N\pi \int_M \om_h\wedge \phi_2 +\de N
\nonumber\\ &=& \frac N\pi\int_M m \psi_2\,d\vol_M +\de N\ \le\ \frac
{Nm}\pi\,\vol_{2m}(\overline U) + \left( \frac m\pi +1\right)\de
N.\label{outer}\end{eqnarray} Using $\psi_1,\phi_1$, we similarly conclude that
for $s_N$ not in  an exceptional set of measure $<e^{-C_{\phi_2}N^{m+1}}$,
\begin{equation} \label{inner} \vol_{2m-2}(Z_{s_N}\cap U) \ge \frac
{Nm}\pi\,\vol_{2m}(U) - \left( \frac m\pi +1\right)\de N\,.
\end{equation}  Replacing
$\left( \frac m\pi +1\right)\de$ by $\de$ in the above, we obtain Theorem
\ref{volumes}.  (Alternatively, since $\vol(Z_{s_N})$ is constant, we can  
obtain
\eqref{inner} by applying \eqref{outer} to $M\sm \overline U$ and using the fact
that $\vol(Z_{s_N}\cap \d U)=0$ a.s.)\qed

\subsubsection{Proof of Corollary \ref{poly}}\label{s-poly}

In terms of  the affine coordinates $z\in \C^m\subset\CP^m$, the Fubini-Study 
metric $h=(1+\|z\|)^{-2}$ induces the \kahler form $\om_\FS=\frac i2 \Theta_h= 
\frac i2 \ddbar \log (1+\|z\|)^2$. We modify the argument in the proof of 
Theorem \ref{volumes} in \S \ref{PROOFMAIN}: 
 Let $\al=\frac 1{(m-1)!\pi}\,\om_\FS\wedge (\frac i2 \ddbar \|z\|^2)^{m-1}$, so 
that $V_U=\int_U\al$. Let $\de>0$ be arbitrary, and
choose $\psi_1,\psi_2\in\ccal^\infty_\R(M)$ such that
$$0\le \psi_1 \le \chi_U \le \psi_2\le 1,\quad \int_M\psi_1\al \ge
\int_U \al-\de, \quad  \int_M\psi_2\al \le
\int_{\overline U}\al+\de\,,$$ and let $\phi_j=
\frac{\psi_j}{(m-1)!}\,(\frac i2 \ddbar \|z\|^2)^{m-1}$, for $j=1,2$. By Theorem 
\ref{main}, for $f_N$ not in a set of measure $<e^{-CN^{m+1}}$, we have
$$\vol_{2m-2}^E(Z_{f_N}\cap U)  \le \int_{Z_{f_N}}\phi_2 \le N\left(\int \psi_2
\al +\de\right) \le N(V_U+2\de)\;,
$$ and similarly, $$\vol_{2m-2}^E(Z_{f_N}\cap U) \ge  N(V_U-2\de)\;,$$
which verifies \eqref{poly1}.  Evaluating the integral for $V_{B(r)}$, we obtain \eqref{poly2}.\qed

\subsubsection{Proof of Theorem \ref{hole}}\label{s-lower}  The upper bound on the
probability is an immediate consequence of Theorem \ref{volumes}, so we need 
only
show the lower bound.  Choose a section $\sigma\in H^0(M,L)$ such that
$\sup_M|\sigma|_h=1$ and $\sigma$ does not vanish on $\overline U$.  For each
$N\ge 1$, we let
$$S^N_1 = \|\sigma^{\otimes N}\|_{L^2}^{-1}\, \sigma^{\otimes N}\in
H^0(M,L^N)\;.$$  We then complete $\{S^N_1\}$ to an orthonormal basis
$\{S^N_1,S^N_2,\dots,S^N_{d_N}\}$ for  $H^0(M,L^N)$.

Since $\|\sigma^{\otimes N}\|_{L^\infty}=1$, we have
$$\|\sigma^{\otimes N}\|_{L^2} \le \vol(M)^{1/2},\quad \forall\ N>0\;.$$

Let $$s_N = c_1 S^N_1+ \cdots + c_{d_N} S^N_{d_N} = c_1 S^N_1+ s'_N$$ be a 
random
section in $H^0(M,L^N)$. Recalling \eqref{Ze}, we have
\begin{equation}\label{CS}|s'_N(z)|_{h^N}= \left|\sum_{j\ge 2} c_j
S^N_j(z)\right|_{h^N} \le \|c'\| \left(\sum_{j\ge
2}|S^N_j(z)|_{h^N}^2\right)^{1/2} \le \|c'\| \Pi_N(z,z)^{1/2} \le \|c'\|
N^{m/2},\end{equation}  for $z\in M$, $N\gg 0$, where $c'= (c_2,\dots,c_{d_N})$.
Write $$\inf_U |\sigma|_h= e^{-a}\,.$$ (Note that $a>0$ since $\sup_U
|\sigma|_h\le 1$ and $i\ddbar \log|\sigma|_h <0$.)  Therefore,
\begin{equation}\label{infU}\inf_U |S^N_1|_{h^N}=  \frac{e^{-a
N}}{\|\sigma^{\otimes N}\|_{L^2}}  \ge b\, {e^{-a
N}}\,,\quad\mbox{where }\ b=\vol(M)^{-1/2}\,.\end{equation}

Let
$$t_N= \frac {b\,e^{-a N}}{ N^{m/2}\sqrt{d_N}}\,.$$ Since $\|c'\|\le
\sqrt{d_N}\,\max_{j\ge 2} |c_j|$, it follows from
\eqref{CS}--\eqref{infU} that for $N\gg 0$, we have
\begin{multline*} \left\{s_N= \sum c_jS^N_j: |c_1|>1,\ |c_j|<t_N
\ \mbox{ for }\ j\ge 2\right\}\\ \subset \left\{s_N=c_1 S^N_1+ s'_N :
\inf_U|c_1 S^N_1|_{h^N}> b\, {e^{-a N}},\ \sup_M |s'_N|_{h^N}\le  b\, {e^{-a
N}}\right\}
\subset\left\{s_N: Z_{s_N}\cap U=\emptyset\right\}.\end{multline*} Using the
estimate Prob$\{|c_j|\le t\}\ge t^2/2$ for $t<1$, we then conclude that
$$\ga_N \{s_N: Z_{s_N}\cap U=\emptyset\} \ge e\inv (t_N^2/2)^{d_N-1}
\ge (CN^{-m}e^{-a N})^{2d_N} \ge e^{-C' N^{m+1}}\,,$$ for $N\gg 0$, where $C,C'$
are positive constants independent of $N$.\qed

\end{document}